\newcommand{\ep}{\epsilon}
\begin{document}
\title{The missing log in large deviations for triangle  counts}
\author{Sourav Chatterjee}
\address{Courant Institute of Mathematical Sciences, New York University, 251 Mercer Street, New York, NY 10012}
\thanks{Research partially supported by NSF grants DMS-0707054 and  DMS-1005312,   and a Sloan Research Fellowship}
\keywords{Erd\H{o}s-R\'enyi random graph, tail bound, concentration inequalities, large deviations}

\begin{abstract}
This paper solves the problem of sharp large deviation estimates for the upper tail of the number of triangles in an Erd\H{o}s-R\'enyi random graph, by establishing a logarithmic factor in the exponent that was missing till now. It is possible that the method of proof may extend to general subgraph counts.
\end{abstract}

\maketitle
\section{Introduction}
Let $G(n,p)$ be an Erd\H{o}s-R\'enyi graph on $n$ edges with edge probability  $p$, that is, a random graph on $n$ vertices where each edge is added independently with probability $p$. Given a fixed small graph $H$, let $X_H$ denote the number of copies of $H$ in $G(n,p)$. The distribution of $X_H$ has been studied extensively since the work of Erd\H{o}s and R\'enyi \cite{erdosrenyi60} in 1960, where the first results were given. In the very sparse case, where only a few copies of $H$ can occur, the probability of $\{X_H > 0\}$ was studied by Bollob\'as~\cite{bollobas81}. A necessary and sufficient condition for the asymptotic normality of $X_H$ (when $n \ra \infty$ and $p$ remains fixed or $p\ra 0$) was obtained by Ruci\'nski \cite{rucinski88}. Sharp large deviation inequalities for $\pp(X_H \le (1-\ep)\ee(X_H))$, where $\ep$ is a fixed positive number were obtained by Janson, \L uczak and Ruci\'nski \cite{JLR00} via the method of Janson's inequality \cite{janson90}. 

One key question that remained open for a long time was the issue of sharp large deviations for the `upper tail', i.e.\ competent bounds for 
\[
\pp(X_H \ge (1+\ep)\ee(X_H)),
\]
where $\ep$ is a fixed constant. 
For historical accounts of this problem, one may look in \cite{bollobas85, JLR00, JR02}. The problem was almost completely intractable until the year~2000, when the first general exponential tail bound was obtained by Vu \cite{vu01}. However, Vu's upper bound from \cite{vu01}, as well as the upper bounds obtained by Janson and Ruci\'nski \cite{JR02} soon after, were quite far from the conjectured bounds. In fact, the paper \cite{JR02} surveys an exhaustive array of techniques, ranging from the simple Azuma-Hoeffding inequality~\cite{hoeffding63}, to the powerful inequalities of Talagrand~\cite{talagrand95}, in the context of this problem. But none of them yield anything close to what are believed to be the optimal bounds. Similarly, the striking developments of Boucheron, Lugosi and Massart \cite{blm03}, while successful in a wide range of concentration problems, could not quite resolve the issue of the upper tail for subgraph counts.

The major breakthroughs in the `upper tail problem' came with the works of Kim and Vu \cite{kimvu04} and Janson, Oleszkiewicz and Ruci\'nski \cite{JOR04} in 2004. Kim and Vu \cite{kimvu04} showed that if $H$ is a triangle, then for any $\ep >0$, there are positive constants $C_1(\ep)$ and $C_2(\ep)$ such that whenever $p \ge n^{-1}\log n$, 
\[
e^{-C_1(\ep)n^2 p^2 \log (1/p)} \le \pp(X_H \ge (1+\ep)\ee(X_H)) \le e^{-C_2(\ep) n^2 p^2}. 
\]
At the same time, Janson et.~al.~\cite{JOR04} proved a similar result for general $H$, with a difference of $\log (1/p)$ between the upper and lower bounds. (Note the lower bound is easily obtained as the probability of a single clique containing all the extra triangles.)

Kim and Vu's technique is based on a method developed in their earlier works \cite{kimvu00, vu02}, which in turn is a highly sophisticated version of the method of martingale differences~\cite{mcdiarmid89}. The method of Janson et.~al.\ builds on an extension of a remarkable result of Alon \cite{alon81} about the maximum number of copies of a graph $H$ in a graph with a given number of edges.

Subsequently, improvements in certain regimes were obtained by Janson and Ruci\'nski \cite{JR04}, but the problem of exactly matching the upper and lower bounds has remained open for the past six years.  The following theorem closes the issue for the count of triangles by proving that the lower bound is sharp.
\begin{thm}\label{theorem}
Let $T$ be the number of triangles in an Erd\H{o}s-R\'enyi graph $G(n,p)$. For each $\ep >0$ there are positive constants $C_1(\ep)$, $C_2(\ep)$ and $C_3(\ep)$ such that whenever $C_1(\ep) n^{-1}\log n\le p\le C_2(\ep)$, we have
\[
\pp(T\ge (1+\ep) \ee(T) ) \le e^{-C_3(\ep) n^2 p^2\log (1/p)}. 
\] 
\end{thm}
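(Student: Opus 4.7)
The plan is to match the lower bound by showing that the only mechanism capable of producing $(1+\ep)\ee(T)$ triangles with appreciable probability is the appearance of a near-clique, and then to bound the probability that such a structure occurs. Recall that the lower bound construction plants a set $U$ of $k \approx c\,\ep^{1/3} np$ vertices inducing a clique: this costs $p^{\binom{k}{2}} = e^{-\Theta(n^2p^2\log(1/p))}$ and generates $\Theta(\ee(T))$ extra triangles. The goal is to show this is the cheapest route.

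The first step is a \emph{structural dichotomy}: whenever $T \ge (1+\ep)\ee(T)$, the graph $G$ must contain a ``dense witness'', namely a vertex subset $U$ of size $k \lesssim np$ whose induced subgraph has edge density close to $1$, or some analogous edge-based certificate. The intuition is that each edge belongs on average to only $\approx np$ triangles, so a diffuse triangle excess would require a diffuse edge excess, which is ruled out by standard concentration for the edge count; only a \emph{concentrated} edge excess can efficiently inflate the triangle count.

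Next comes the \emph{certificate count}: for each candidate witness of size $k$, the probability that $G$ contains it is at most $p^{(1-\delta)\binom{k}{2}}$, and the number of such witnesses is $\binom{n}{k} \le (en/k)^k$. Using $k \gtrsim np$ together with the hypothesis $p \ge C_1(\ep) n^{-1}\log n$, the entropy term $k\log(n/k)$ is dominated by $\binom{k}{2}\log(1/p)$; this is precisely where the lower bound on $p$ enters the theorem. A union bound over the possible witnesses then gives the target $e^{-C_3(\ep) n^2 p^2 \log(1/p)}$. Alternative triangle-excess mechanisms must also be excluded: vertices of anomalously high degree, edges participating in anomalously many triangles, or diffuse irregularities. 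For each I would apply a tight Kim--Vu or martingale-type bound to verify that its contribution to the excess is $o(\ep\,\ee(T))$ with probability $\ge 1 - e^{-C n^2 p^2 \log(1/p)}$.

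The central obstacle is the structural dichotomy. The reason the previous bounds of Kim--Vu and Janson--Oleszkiewicz--Ruci\'nski lost the $\log(1/p)$ factor is that generic concentration machinery (polynomial inequalities, Talagrand's inequality, log-Sobolev bounds, the entropy method of \cite{blm03}) is essentially blind to the difference between planting a clique and uniformly raising the edge density across the whole graph; only the former carries the $\log(1/p)$ cost. Extracting the log factor therefore requires a genuinely combinatorial reduction that exploits the rigidity of cliques: small perturbations of a clique save almost nothing in triangle count but save substantially in edge probability, so the minimizer of the cost is sharply concentrated on near-cliques. I expect the proof to proceed via an iterative ``peel-and-condition'' scheme, successively removing vertices or edges that host disproportionately many triangles until one is left either with an exposed dense witness or with a residual graph where sharper concentration applies. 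The technical heart will be quantifying how each peeling step either uncovers such a witness or reduces the triangle count by a controlled amount, so that the union bound over peeling outcomes still produces the desired $\log(1/p)$ factor in the exponent.
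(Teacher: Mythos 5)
Your proposal and the paper's proof diverge fundamentally in strategy, and as written the proposal has a genuine gap plus a factual error that prevents it from constituting a proof.

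The paper does not attempt a structural dichotomy of the sort you describe. Instead it decomposes $T$ according to ``good'' edges (edges in fewer than $\ep\ell np$ triangles) and ``good'' vertices (degree $< 7np$), writing $T \le T' + T_0 + T_1 + T_2 + T_3$, and then establishes a new, general concentration inequality (Theorem~\ref{mainthm}) for sums of nonnegative dependent variables admitting a certain coupling structure. The key quantitative point is that for the main term $T'$ (triangles with all good edges), the coupling constant $a$ in Theorem~\ref{mainthm} can be taken as small as $3\ep\ell np = 3\ep np/\log(1/p)$, and dividing by this small $a$ in the exponent is exactly what produces the extra $\log(1/p)$. The bad-edge and bad-vertex contributions $T_0,\dots,T_3$ are handled by union bounds over subsets of bounded size combined with the same inequality, exploiting that the number of bad vertices and bad edges is small with overwhelming probability. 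There is no witness-counting step; the $\log(1/p)$ emerges from the moment-generating-function argument, not from paying $p^{\Theta(n^2p^2)}$ for a planted dense set.

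Your route, by contrast, is in the spirit of DeMarco and Kahn (cited in the paper as a subsequent, shorter proof): reduce the event $T \ge (1+\ep)\ee(T)$ to the presence of a combinatorial certificate and then union-bound over certificates. That is a legitimate alternative strategy. However, as you yourself acknowledge, the \emph{structural dichotomy} --- that an excess of $\ep\ee(T)$ triangles forces a density-$(1-\delta)$ witness of size $\gtrsim np$ --- is the entire content of the theorem, and your proposal leaves it as an unproven hope (``I expect the proof to proceed via an iterative peel-and-condition scheme''). Without that reduction, nothing is proved. Moreover, there is an outright error in your fallback plan: you propose to control the ``residual'' excess mechanisms (high-degree vertices, overloaded edges, diffuse excess) by ``a tight Kim--Vu or martingale-type bound'' to get failure probability $e^{-Cn^2p^2\log(1/p)}$. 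But Kim--Vu and martingale methods are precisely what give only $e^{-Cn^2p^2}$; the absence of the $\log(1/p)$ in those bounds is the motivating problem of the paper. Any argument that leans on them for a $\log(1/p)$-strength tail bound for triangle-like statistics is circular. The paper gets around this by proving a new inequality from scratch (Lemma~\ref{maintail} and Theorem~\ref{mainthm}), whose exponent $-\frac{t}{a}\log\frac{t}{3\lambda}$ is genuinely logarithmic in $t/\lambda$, and by engineering the decomposition so that $a$ is small for the good part and the bad parts have tiny support.
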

The proof does not involve martingales or counting arguments. It is somewhat hard to decide on a nomenclature for the method; it may be tentatively called it a `localization argument'. It is plausible that the technique may be useful in a wider class of problems. 

It must be mentioned that a couple of months after the first draft of this paper was posted on arXiv, DeMarco and Kahn \cite{demarcokahn10} posted a different proof of the conjecture. The DeMarco-Kahn proof is shorter and   gives a little more. 

It should also be mentioned that recently, a related problem has been investigated for dense graphs (i.e.\ $p$ fixed and $n \ra\infty$). The objective is to find the exact constant $C(p,\ep)$ such that
\[
\pp(T \ge (1+\ep)\ee(T)) = e^{-n^2 C(p, \ep)(1+o(1))}.
\]
In the language of large deviations, this is the problem of evaluating the large deviation rate function. The first progress in this problem was made in~\cite{chatterjeedey09} where it was shown that  given $p\in (0,1)$, there exist $p^3/6< t'\le t''< 1/6$ such that for all $t\in (p^3/6,t')\cup(t'',1/6)$, 
\begin{align*}\label{cd}
\pp(T \ge tn^3) = e^{-n^2I_p((6t)^{1/3})(1+o(1))}, 
\end{align*}
where $I_p$ is the function 
\[
I_p(x) := \frac{x}{2}\log\frac{x}{p} + \frac{1-x}{2}\log \frac{1-x}{1-p}. 
\]
However the result does not cover all values of $(p,t)$. The rate function in the full regime has been obtained very recently in \cite{chatterjeevaradhan10} as a consequence of a general large deviation principle for dense Erd\H{o}s-R\'enyi graphs. Two other papers in this direction are \cite{bolthausenetal09} and~\cite{doringeichelsbacher09}. 

\section{Notation and terminology}
In the following, $G$ will denote an Erd\H{o}s-R\'enyi graph $G(n,p)$ and $T$ will be the number of triangles in $G$. By convention, $C$ will denote any positive absolute constant, whose value may change from line to line. Similarly, $C(\ep)$ will denote any positive constant whose value depends only on $\ep$. 
For simplicity of notation, set  
\[
L := \log (1/p), \ \ \ell := 1/L.
\]
Given $\ep$, $n$ and $p$, let us call an edge in $G$  `good' if there are less than $\ep \ell np$ triangles containing the edge. Any edge that is not `good' will be called `bad'.
In the same vein, call a vertex `good' if it has less than $7np$ neighbors in $G$, and `bad' otherwise. (There is nothing special about $7$; any large enough constant is good for our purposes.) Define:
\begin{align*}
T' &:= \#\text{triangles in $G$ with all good edges.}\\
T_0 &:= \#\text{triangles in $G$ with at least one bad edge, but all good vertices.}\\
T_1 &:= \#\text{triangles in $G$ with exactly one bad vertex and two good vertices.}\\
T_2 &:= \#\text{triangles in $G$ with exactly two bad vertices and one good vertex.}\\
T_3 &:= \#\text{triangles in $G$ with all bad vertices.}
\end{align*}
Then clearly,
\begin{equation}\label{mainineq}
T \le T' + T_0 + T_1 + T_2 + T_3. 
\end{equation}
Thus, it suffices to get upper tail estimates for the summands on the right hand side. This is the program for  the subsequent sections.
Some further notation and terminology will be introduced along the way. 

\section{Concentration inequality}
The following theorem is most crucial component of the proof. It gives a generic concentration inequality for sums of dependent random variables. 
\begin{thm}\label{mainthm}
Let $F$ be a finite set and $(X_i)_{i\in F}$, $(X'_i)_{i\in F}$, $(X_{j(i)})_{i,j\in F}$ be collections of nonnegative random variables with finite moment generating functions, defined on the same probability space, and satisfying the following conditions:
\begin{enumerate}
\item[$(a)$] For all $i$, $X_i \le X'_i$.
\item[$(b)$] For all $i$, the random variables $X'_i$  and $\sum_{j\in F} X_{j(i)}$ are independent.
\item[$(c)$] For all $i$, $\sum_{j\in F}X_{j(i)} \le \sum_{j\in F}X_j$. 
\item[$(d)$] There is a constant $a$ such that for all $i$, when $X_i > 0$, we have
\[
\sum_{j\in F} X_j \le a + \sum_{j\in F} X_{j(i)}. 
\]
\end{enumerate}
Let $\lambda := \sum_{i\in F} \ee(X'_i)$. Then for any $t\ge \lambda$,
\[
\pp\Big(\sum_{i\in F} X_i \ge t\Big) \le \exp\Big(-\frac{t}{a}\Big(\log \frac{t}{\lambda} - 1 + \frac{\lambda}{t}\Big)\Big) \le \exp\Big(-\frac{t}{a} \log \frac{t}{3\lambda}\Big).
\]
(Note that the second bound holds trivially for $0< t< \lambda$ as well.) 
\end{thm}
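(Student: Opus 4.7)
The plan is to run a Chernoff-style argument on $S := \sum_{i \in F} X_i$, where the main work is controlling the moment generating function $\phi(\theta) := \ee[e^{\theta S}]$ by setting up a differential inequality whose solution is of Poisson type. The four hypotheses are tailor-made to be applied, in sequence, to the quantity $\ee[X_i e^{\theta S}]$.

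The starting observation is that $\phi$ is smooth (finite MGF assumption), and
\[
\phi'(\theta) = \sum_{i \in F} \ee\bigl[X_i e^{\theta S}\bigr].
\]
Now I would peel off the four conditions one at a time. Setting $S_{(i)} := \sum_{j \in F} X_{j(i)}$, condition $(d)$ gives $X_i e^{\theta S} \le X_i e^{\theta a} e^{\theta S_{(i)}}$ pointwise (it is vacuous when $X_i = 0$ and holds by $(d)$ otherwise); condition $(a)$ lets me replace $X_i$ by $X'_i$; condition $(b)$ lets me factor $\ee[X'_i e^{\theta S_{(i)}}] = \ee[X'_i]\,\ee[e^{\theta S_{(i)}}]$; and condition $(c)$ gives $\ee[e^{\theta S_{(i)}}] \le \phi(\theta)$. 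Summing over $i$,
\[
\phi'(\theta) \le e^{\theta a}\,\lambda\,\phi(\theta) \qquad \text{for all } \theta \ge 0.
\]

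Dividing by $\phi$ and integrating from $0$ to $\theta$ (note $\phi(0)=1$), this differential inequality yields the Poisson-type bound
\[
\phi(\theta) \le \exp\!\Bigl(\tfrac{\lambda}{a}\bigl(e^{\theta a}-1\bigr)\Bigr).
\]
Markov's inequality $\pp(S \ge t) \le e^{-\theta t}\phi(\theta)$ combined with this bound gives $\pp(S \ge t) \le \exp\!\bigl(-\theta t + \tfrac{\lambda}{a}(e^{\theta a}-1)\bigr)$ for every $\theta \ge 0$. Since $t \ge \lambda$, the optimizer is $\theta = \tfrac{1}{a}\log(t/\lambda) \ge 0$, and plugging in yields exactly
\[
\pp(S \ge t) \le \exp\!\Bigl(-\tfrac{t}{a}\bigl(\log(t/\lambda) - 1 + \lambda/t\bigr)\Bigr),
\]
the first inequality. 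The second follows immediately: $\log(t/\lambda) - 1 + \lambda/t \ge \log(t/\lambda) - 1 = \log(t/(e\lambda)) \ge \log(t/(3\lambda))$ because $e < 3$. For $0 < t < \lambda$, the term $\log(t/(3\lambda))$ is negative (assuming $t < 3\lambda$), so the exponential bound exceeds $1$ and is trivial.

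The only genuinely delicate step is the ordered use of the four hypotheses to bound $\ee[X_i e^{\theta S}]$: the substitution $S \mapsto a + S_{(i)}$ produces a factor that still couples $X_i$ with the exponential, so one must move from $X_i$ to the dominating $X'_i$ before invoking independence, and only then replace $S_{(i)}$ by $S$ inside the expectation. Once this ordering is right, everything else is routine Chernoff optimization.
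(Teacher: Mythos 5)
Your proof is correct and takes essentially the same route as the paper: the paper separates out the moment-generating-function bound $\ee(Xe^{\theta X})\le\lambda e^{\theta a}\ee(e^{\theta X})$ as a standalone lemma (Lemma~\ref{maintail}) and then verifies its hypothesis via conditions $(a)$--$(d)$ in the exact order you use, whereas you fold both steps into a single differential-inequality argument on $\phi(\theta)=\ee(e^{\theta S})$. The key chain of estimates, the integration to a Poisson-type bound on $\phi$, the Chernoff optimization at $\theta=a^{-1}\log(t/\lambda)$, and the derivation of the weaker second bound all match the paper's argument.
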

It is a bit difficult to convey the meaning of this theorem. Hopefully, it will become more transparent as the technique is applied numerous times in the proof of Theorem \ref{theorem}. The following simple lemma is the first step in the proof of Theorem \ref{mainthm}. 
\begin{lmm}\label{maintail}
Suppose $X$ is a nonnegative random variable with a finite moment generating function and $\lambda$ and $a$ are positive constants such that for any $\theta \ge 0$, we have
\[
\ee(Xe^{\theta X}) \le \lambda e^{\theta a} \ee(e^{\theta X}).
\]
Then for any $t \ge \lambda$ we have
\[
\pp(X\ge t) \le \exp\Big(-\frac{t}{a}\Big(\log \frac{t}{\lambda} - 1 + \frac{\lambda}{t}\Big)\Big). 
\]
\end{lmm}
\begin{proof}
Let $m(\theta):= \ee(e^{\theta X})$ be the moment generating function of $X$. From the hypothesis of the theorem, we have
\[
\frac{d}{d\theta} \log m(\theta) \le \lambda e^{\theta a}. 
\]
Integrating, we get
\[
m(\theta)\le \exp\Big(\frac{\lambda(e^{\theta a} - 1)}{a}\Big). 
\]
Again, for any positive $\theta$ and $t$,
\[
\pp(X \ge t) \le e^{-\theta t} m(\theta).
\]
The proof is completed by taking $\theta = a^{-1}\log (t/\lambda)$. 
\end{proof}
\begin{proof}[Proof of Theorem \ref{mainthm}]
Let $X:=\sum_{i\in F} X_i$. Then for any $\theta \ge 0$, by the condition $(d)$ and the nonnegativity of the $X_i$'s, we have
\begin{align*}
\ee(Xe^{\theta X}) &= \sum_{i\in F} \ee(X_i e^{\theta X})\\
&\le \sum_{i\in F} \ee(X_i e^{\theta a + \theta \sum_{j\in F} X_{j(i)}}). 
\end{align*}
But again, due to the nonnegativity of $X_i$ and the conditions $(a)$ and $(b)$, we have
\[
\ee(X_i e^{\theta \sum_{j\in F} X_{j(i)}}) \le \ee(X'_i e^{\theta \sum_{j\in F} X_{j(i)}}) = \ee(X'_i)\ee( e^{\theta \sum_{j\in F} X_{j(i)}}).
\]
Finally, by condition $(c)$ we get
\[
\ee( e^{\theta \sum_{j\in F} X_{j(i)}})\le \ee( e^{\theta X}).
\]
Combining the steps we get
\[
\ee(Xe^{\theta X}) \le \lambda e^{\theta a} \ee(e^{\theta X}). 
\]
The proof of the first inequality is completed by applying Lemma \ref{maintail}. To get the second (weaker) inequality, simply observe that
\[
\log \frac{t}{\lambda} - 1 + \frac{t}{\lambda} \ge \log \frac{t}{e\lambda} \ge \log \frac{t}{3\lambda}.
\]
This completes the proof. 
\end{proof}

\section{Tail bound for $T'$}
The objective of this section is to obtain a tail inequality for the number of triangles with all good edges. 
Let us begin by introducing some notation, beyond what has been already defined in previous sections. Let $V^{(2)}$ denote the set of all unordered pairs of distinct vertices $uv$ and let $V^{(3)}$ denote the set of all unordered triplets of distinct vertices $uvw$.  
For each $uv\in V^{(2)}$, let
\[
I_{uv} := 1_{\{\text{$uv$ is an edge in $G$}\}}.
\]
Next, let  
\[
Z_{uv} := 1_{\{\text{$uv$ is a good edge in $G$}\}}.
\] 
For each $uvw\in V^{(3)}$, let 
\[
Y_{uvw} := 1_{\{\text{$uvw$ is a triangle in $G$}\}} = I_{uv}I_{vw}I_{uw},
\]
and let 
\[ 
X_{uvw} :=  1_{\{\text{$uvw$ is a triangle with all good edges}\}} = Z_{uv}Z_{vw}Z_{uw}.
\]
Then note that
\[
T' = \sum_{uvw\in V^{(3)}} X_{uvw}.
\]
We intend to apply Theorem \ref{mainthm} to the collection $(X_{uvw})$. We already have $Y_{uvw}$ such that $X_{uvw} \le Y_{uvw}$ and therefore condition $(a)$ is satisfied if we set $X'_{uvw} = Y_{uvw}$. The challenge is now to construct $X_{xyz(uvw)}$ appropriately. The rest of this section is devoted to the proof of the following result.
\begin{prop}\label{tp}
There exists a positive constant $C(\ep)$ depending only on $\ep$  such that  
\[
\pp(T' \ge \ee(T) +  \ep n^3 p^3) \le e^{-C(\ep)n^2 p^2 \log (1/p)}.
\]
\end{prop}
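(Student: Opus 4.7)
The plan is to apply Theorem~\ref{mainthm} to $(X_{uvw})_{uvw \in V^{(3)}}$ with $X'_{uvw} := Y_{uvw}$, so that $\lambda = \sum_{uvw}\ee(Y_{uvw}) = \ee(T)$ and condition~(a) holds trivially since $Z_{ab} \le I_{ab}$. The whole proof then reduces to constructing the family $\{X_{xyz(uvw)}\}$ satisfying (b), (c), (d) with a constant $a$ of order $\ep\ell np$. Indeed, with $t := \ee(T) + \ep n^3 p^3$, the ratio $t/\lambda$ is a constant $>1$ depending only on $\ep$, so $\log(t/\lambda) - 1 + \lambda/t$ is a positive $\ep$-dependent constant, and Theorem~\ref{mainthm} would yield an exponent
\[
\tfrac{t}{a}\bigl(\log(t/\lambda) - 1 + \lambda/t\bigr) \asymp \tfrac{\lambda}{a} \asymp \tfrac{n^3 p^3}{\ep \ell np} = \tfrac{n^2 p^2 \log(1/p)}{\ep},
\]
which is exactly the desired bound.

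For the construction, I would split by vertex overlap. For $\{x,y,z\} \cap \{u,v,w\} = \emptyset$, take $X_{xyz(uvw)} := X_{xyz}$; a direct check shows that $Z_{xy}$ for $x,y \notin \{u,v,w\}$ is measurable with respect to edges disjoint from $\{uv, vw, uw\}$, so independence in (b) is preserved. For triples overlapping $\{u,v,w\}$, let $G^{(uvw)}$ be the graph obtained by deleting the (at most) three edges $uv, vw, uw$ from $G$, let $N^{(uvw)}_{xy}$ be the number of triangles through $xy$ in $G^{(uvw)}$, and define $X_{xyz(uvw)}$ to be the indicator that $xyz$ is a triangle of $G^{(uvw)}$ with $N^{(uvw)}_{ab} < \ep \ell np - 2$ for each of its three edges $ab$. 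Condition~(b) is immediate, and condition~(c) holds because deleting the three edges can destroy at most two triangles through any remaining edge $xy$ (only $xyv$ and $xyw$ can be lost when $x = u$, and analogously in the other cases), so the threshold reduction by~$2$ is enough to guarantee $X_{xyz(uvw)} \le X_{xyz}$ pointwise.

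Condition~(d) is the main obstacle. When $X_{uvw} > 0$ the edges $uv, vw, uw$ are good, so each contains fewer than $\ep \ell np$ triangles of $G$. The difference $\sum_{xyz}(X_{xyz} - X_{xyz(uvw)})$ splits by $k := |\{x,y,z\} \cap \{u,v,w\}|$: the case $k = 0$ contributes zero by construction; the cases $k \in \{2,3\}$ contribute at most $3\ep\ell np + 1$ because every such triangle contains one of $uv, vw, uw$; the delicate case $k = 1$ consists of triangles $ryz$ with $r \in \{u,v,w\}$ and $y,z \notin \{u,v,w\}$, for which a downgrade from $X_{ryz} = 1$ to $X_{ryz(uvw)} = 0$ can occur only when one of the incident edges $ry$ satisfies $N_{ry} - N^{(uvw)}_{ry} \ge 1$, i.e.\ participates in a triangle using one of the deleted edges. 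The number of such incidences is at most $N_{uv} + N_{uw} + N_{vw} < 3\ep\ell np$, and a careful double-counting argument combined with the good edge property of $uv, vw, uw$ converts this into an $O(\ep\ell np)$ bound on the $k = 1$ contribution. The hardest part is precisely this $k = 1$ step: a naive bound using only the good-vertex bound $\deg(r) < 7np$ costs an extra factor of $np$ and yields the wrong exponent, so one must genuinely exploit the good-edge property of the three edges of $uvw$ to kill the sparse set of "borderline" downgraded edges through $r$.
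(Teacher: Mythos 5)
Your high-level plan --- apply Theorem~\ref{mainthm} with $X'_{uvw} = Y_{uvw}$, $\lambda = \ee(T)$, aim for $a \asymp \ep\ell np$, and handle the $k=0$ case by $X_{xyz(uvw)} := X_{xyz}$ --- matches the paper, and your computation of the resulting exponent is correct. But there is a genuine gap in your construction of $X_{xyz(uvw)}$ for $k=1$, and it is precisely where the paper's proof does something cleverer.

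You define $X_{xyz(uvw)}$ for overlapping triples by working in $G^{(uvw)}$ and lowering the goodness threshold from $\ep\ell np$ to $\ep\ell np - 2$ on all three edges of $xyz$. This does give conditions~(b) and~(c), but your claim that a downgrade $X_{ryz}=1 \to X_{ryz(uvw)}=0$ ``can occur only when $N_{ry} - N^{(uvw)}_{ry} \ge 1$'' is false: the threshold shift alone creates downgrades. If, say, $N^{(uvw)}_{yz} = N_{yz} \in [\ep\ell np - 2, \ep\ell np)$ (a ``borderline'' edge), then $X_{ryz}$ can equal~$1$ while $X_{ryz(uvw)} = 0$ even though no triangle through $yz$ is destroyed by deleting $uv,vw,uw$. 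There is no a priori control on the number of borderline edges, and $X_{uvw}=1$ gives no degree bound on $u,v,w$, so the $k=1$ contribution in your decomposition is not $O(\ep\ell np)$; the ``careful double-counting argument'' you invoke is left entirely to the reader and does not obviously exist.

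The paper's construction avoids the borderline problem altogether by arranging \emph{exact} equality $X_{xyz(uvw)} = X_{xyz}$ for all $xyz$ with $k \le 1$ on the event $\{X_{uvw}=1\}$. For $x = u$ and $y,z \notin \{u,v,w\}$, instead of counting triangles through $uy$ in $G^{(uvw)}$ and shifting the threshold, the paper sets $E_y := 1_{\{N_y + T_y < \ep\ell np\}}$ where $T_y$ counts triangles $uyr$ with $r\notin\{v,w\}$ and $N_y$ counts neighbors of $y$ in $\{v,w\}$. The quantity $N_y + T_y$ is $\{uv,vw,uw\}$-measurable (so (b) holds), it always dominates the true triangle count through $uy$ (so (c) holds), and --- crucially --- it \emph{equals} the true count whenever $uv$ and $uw$ are both present, i.e.\ whenever $X_{uvw}=1$. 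Then $E_y = Z_{uy}$ exactly on that event, so the $k=1$ sum contributes nothing to the constant $a$ in condition~(d), and $a = 3\ep\ell np$ comes only from triangles sharing two or more vertices with $uvw$ (each contains one of the good edges $uv,vw,uw$). This ``would-be count'' device is the missing idea in your attempt: you must anticipate the conditioning $X_{uvw}=1$ inside the definition of $X_{xyz(uvw)}$ rather than pay a penalty for it afterward.
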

\begin{proof}
Fix a triplet $uvw\in V^{(3)}$. Let $K$ be the set of triplets that share no vertices with $uvw$ and let $K'$ be the set of triplets that share exactly one vertex with $uvw$. Let $K''$ be the set of triplets that share two or more vertices with $uvw$. Then $V^{(3)} = K \cup K' \cup K''$. 

Take any $xyz\in K'$. Then $xyz$ shares exactly one vertex with $uvw$.  Suppose without loss of generality  that $x=u$. Define the random variable $X_{xyz(uvw)}$ as follows. First, let
\begin{align*}
T_y &:= \#\text{triangles of the form $xyr$ where $r\not \in \{v,w\}$}, \\
T_z &:= \#\text{triangles of the form $xzr$ where $r\not \in \{v,w\}$}.  
\end{align*}
Next, let
\begin{align*}
N_y &:= \#\text{neighbors of $y$ in the set $\{v,w\}$},\\
N_z &:= \#\text{neighbors of $z$ in the set $\{v,w\}$}.
\end{align*}
Define
\begin{align*}
E_y &:= 1_{\{N_y + T_y < \ep\ell np\}}, \ \ E_z := 1_{\{N_z + T_z < \ep \ell np\}}.
\end{align*}
Finally, let
\[
X_{xyz(uvw)} := Y_{xyz}Z_{yz} E_y E_z. 
\]
Now note that since $\{v,w\} \cap \{y,z\} =\emptyset$, the definitions of $Y_{xyz}$, $Z_{yz}$, $T_y$, $T_z$, $N_y$ and $N_z$ do not involve the edges $uv, vw, uw$. In particular, the definition of $X_{xyz(uvw)}$ does not involve the edges $uv, vw, uw$. This is true for any $xyz\in K'$. 

If $xyz\in K$, let $X_{xyz(uvw)} = X_{xyz}$. Then clearly the definition of $X_{xyz(uvw)}$ does not involve the edges $uv, vw, uw$.

Finally, if $xyz\in K''$, let $X_{xyz(uvw)} \equiv 0$.

From the above construction and observations,  we see that condition $(b)$ of Theorem \ref{mainthm} is satisfied.

Next, suppose that for some $xyz\in K'$ with $x=u$, we have $X_{xyz(uvw)} = 1$ in a particular realization of $G$. Then $xyz$ is a triangle, $yz$ is a good edge, and $E_y = E_z = 1$. We claim that $xy$ is a good edge. To see this, simply note that the number of triangles containing the edge $xy$ is bounded by $T_y + N_y$. Similarly, $xz$ is also a good edge. Therefore $X_{xyz} = 1$. Thus, for $xyz\in K'$, $X_{xyz(uvw)} \le X_{xyz}$. When $xyz\in K\cup K''$, the inequality is trivially true. This establishes condition $(c)$ of Theorem \ref{mainthm}.

Finally, suppose that for some $xyz\in K'$ with $x=u$, we have $X_{xyz} = 1$ in a particular realization of $G$, which also satisfies $X_{uvw} = 1$. Then $xv$ and $xw$ are edges in $G$, and hence the number of triangles containing the edge $xy$ is exactly equal to $N_y + T_y$. Therefore in this situation, $E_y = Z_{xy}$. Similarly $E_z = Z_{xz}$. Therefore, $X_{xyz(uvw)} = Y_{xyz}Z_{xy}Z_{xz}Z_{yz} = X_{xyz}$. 
Thus, when $X_{uvw} = 1$, we have $X_{xyz(uvw)} = X_{xyz}$ for all $xyz\in K'$. The same is trivially true for $xyz\in K$.

Moreover, if $X_{uvw} = 1$, the number of triangles in $G$  sharing at least two vertices with $uvw$  is bounded by $3\ep \ell np$. Combining these observations,  we see that when $X_{uvw} = 1$,
\[
\sum_{xyz\in V^{(3)}} X_{xyz} \le 3\ep \ell np + \sum_{xyz\in V^{(3)}} X_{xyz(uvw)}. 
\]
This establishes condition $(d)$ of Theorem \ref{mainthm} with $a = 3\ep \ell np$. Thus, if we let $\lambda := \sum_{xyz} \ee(X'_{xyz}) = \ee(T)$, then by the first inequality in Theorem \ref{mainthm} we get
\[
\pp(T' \ge \lambda + \ep n^3 p^3) \le \exp\Big(-\frac{\lambda + \ep n^3 p^3}{\ep \ell np} (\log c - 1 + c^{-1})\Big),
\]
where 
\[
c = c(n,p, \ep) = \frac{\lambda + \ep n^3 p^3}{\lambda}. 
\]
It is easy to check that $\log c - 1 + c^{-1}$ converges to a positive constant depending only on $\ep$ as $np \ra \infty$. This completes the proof. 
\end{proof}

\section{Tail bound for $T_0$}
The purpose of this section is to obtain a tail bound for the number of triangles in $G$ with at least one bad edge but all good vertices. We shall continue to use the notation and terminology introduced in the previous sections. We also need to introduce some additional notation. For each $uv\in V^{(2)}$, let 
\[
t_{uv} := \sum_{w\in V\backslash \{u,v\}} I_{uw} I_{vw}.
\]
For a set $F\subseteq V^{(2)}$, let
\[
t(F) := \sum_{uv\in F} t_{uv}.
\]
Two elements of $V^{(2)}$ will be called `non-adjacent' if they do not share a common vertex, and `adjacent' otherwise. A set $F \subseteq V^{(2)}$ will be called `matching' if no two elements of $F$ share a common vertex.
\begin{lmm}\label{e4lmm}
Let $A$ be a matching. Then for each $t > 0$,
\[
\pp(t(A) \ge t) \le \exp\Big(-\frac{t}{3}\log \frac{t}{3|A|np^2}\Big). 
\]
\end{lmm}
\begin{proof}
Let $R$ be the set of triplets $uvw$ such that $uv\in A$ and $w\in V\backslash \{u,v\}$. Throughout this proof, $uvw$ will denote a typical element of $R$. For each such $uvw$, let
\[
W_{uvw} := I_{uw} I_{vw},
\]
Since $t(A) = \sum_{uvw\in R} W_{uvw}$, We intend to apply Theorem \ref{mainthm} to the collection $(W_{uvw})_{uvw\in R}$. 

Fix $uvw\in R$. Let $D$ be the set of all $xyz \in R$ such that 
\begin{equation}\label{nonemp}
\{xz, yz\} \cap \{uw, vw\} \ne \emptyset.
\end{equation}
This can happen if $uvw = xyz$. Suppose this is not the case. Then since $A$ is a matching, $\{u,v\}\cap \{x,y\} =\emptyset$.  Thus if \eqref{nonemp} holds and $uvw \ne xyz$, then we must have $\{x,y\}\cap\{ w\} \ne \emptyset$ and $\{u,v\}\cap \{z\}\ne \emptyset$. But again because $A$ is a matching, the choice of $x$ determines the choice of $y$. Combining the above observations, we see that $|D|\le 3$. 

Since $W_{uvw}$ depends only on the edges $uw$ and $vw$, $W_{uvw}$ is independent of the collection  $\{W_{xyz}\}_{xyz\in R\backslash D}$. So, if we define $W_{xyz(uvw)} = 0$ for $xyz \in D$, $W_{xyz(uvw)} = W_{xyz}$ for $xyz \in R \backslash D$, and $W'_{xyz}=W_{xyz}$ for all $xyz\in R$, then the conditions of Theorem \ref{mainthm} are satisfied with $a = 3$ and $\lambda = \sum_{xyz\in R} \ee(W_{xyz}') \le |A|np^2$. This completes the proof of the lemma.
\end{proof}
Next, let us define two events:
\begin{align*}
E_1 &:= \{\text{$\exists \, F \subseteq V^{(2)}$ such that $F$ is a matching, $|F|> Lnp$,}\\
&\qquad \qquad \text{ and all elements of $F$ are bad edges in $G$}\}.\\
E_2 &:= \{\text{$\exists \, F \subseteq V^{(2)}$ such that $F$ is a matching, $|F|\le Lnp$,}\\
&\qquad \qquad \text{ and $t(F) > \ep n^2 p^2$}\}. 
\end{align*}
\begin{lmm}\label{e4}
There is a constant $C(\ep)>0$ depending only on $\ep$ such that whenever $C(\ep)^{-1} n^{-1}\log n\le p\le C(\ep)$, we can conclude that both $\pp(E_1)$ and $\pp(E_2)$ are bounded by $\exp(-C(\ep)n^2 p^2 \log (1/p))$. 
\end{lmm}
\begin{proof}
If $E_1$ holds, there exists a set $F$ satisfying the conditions for $E_1$. By arbitrarily dropping some elements of $F$, we can find a subset $F'$ of $F$ such that $Lnp < |F'|\le 2Lnp$. Then $F'$ is again a matching, and  since the elements of $F'$ are bad edges in $G$, 
\[
t(F') \ge |F'|\ep\ell np \ge \ep n^2 p^2. 
\]
By Lemma \ref{e4lmm} we know that for any fixed matching  $A\subseteq V^{(2)}$ of size $\le 2Lnp$, 
\[
\pp(t(A) \ge \ep n^2 p^2)\le \exp\Big(-\frac{\ep n^2p^2}{3}\log \frac{\ep n^2 p^2}{6 L n^2 p^3}\Big). 
\]
Clearly, there exists $C(\ep)>0$ such that if $p\le C(\ep)$, the expression of the right is bounded by $ e^{- C(\ep)n^2 p^2 \log (1/p)}$. 
The number of choices of $A\subseteq V^{(2)}$ with $|A|\le 2Lnp$ is bounded by 
\[
\sum_{0\le k\le 2Lnp} {n(n-1)/2 \choose k} \le e^{CLnp \log n}. 
\]
Combining the above observations, we see that
\[
\pp(E_1) \le  e^{CLnp \log n - C(\ep)n^2 p^2 \log (1/p)}.
\]
Thus, if $C'(\ep)^{-1}n^{-1}\log n\le p\le C'(\ep)$ for some appropriately small constant $C'(\ep)$, we get the required bound.

To get the bound on $\pp(E_2)$, we similarly apply Lemma \ref{e4lmm} with fixed $A\subseteq V^{(2)}$ of size $\le Lnp$ and $t = \ep n^2 p^2$, and then take union bound over all choices of $A$. 
\end{proof}

Finally we arrive at the main result of this section.
\begin{prop}\label{t0}
There is a constant $C(\ep)>0$ depending only on $\ep$ such that if $C(\ep)^{-1} n^{-1}\log n\le p\le C(\ep)$, we have 
\[
\pp(T_0 > 15\ep n^3 p^3)\le e^{-C(\ep) n^2 p^2 \log (1/p)}.
\]
\end{prop}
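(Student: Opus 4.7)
The plan is to work on the event $E_1^c\cap E_2^c$, which by Lemma~\ref{e4} has probability at least $1 - 2\exp(-C(\ep)n^2p^2\log(1/p))$, and to establish the deterministic bound $T_0 \le 15\ep n^3 p^3$ on that event. Let $B^*$ denote the set of bad edges $uv$ that lie in at least one triangle $uvw$ whose three vertices are all good. Every triangle counted in $T_0$ contains at least one bad edge and has all good vertices, so that edge lies in $B^*$; hence each triangle of $T_0$ is counted at least once in $\sum_{e\in B^*} t_e = t(B^*)$, yielding the pointwise inequality $T_0 \le t(B^*)$.

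The key structural observation is that every endpoint of an edge of $B^*$ is a good vertex (being a vertex of a good-vertex triangle), so its degree in $G$ is less than $7np$. Consequently the graph $B^*$ has maximum degree $\Delta(B^*) < 7np$, and a standard greedy edge-coloring of $B^*$ (applied to its line graph) partitions it into $k\le 2\Delta(B^*) - 1 < 14np$ matchings $M_1,\dots,M_k$. Each $M_i$ is a matching of bad edges, so the event $E_1^c$ forces $|M_i|\le Lnp$, and then, since $|M_i|\le Lnp$, the event $E_2^c$ forces $t(M_i)\le \ep n^2 p^2$ for every $i$.

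Summing the per-matching bound over the color classes,
\[
T_0 \le t(B^*) = \sum_{i=1}^{k} t(M_i) \le 14np\cdot \ep n^2 p^2 = 14\ep n^3 p^3 \le 15\ep n^3 p^3,
\]
which combined with the bound on $\pp(E_1\cup E_2)$ from Lemma~\ref{e4} completes the proof.

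The bulk of the probabilistic work has already been carried out in Lemmas~\ref{e4lmm} and~\ref{e4}; the remaining idea, and the reason the good-vertex condition was built into the definition of $T_0$ in the first place, is that it caps $\Delta(B^*)$ at $O(np)$. Without this cap, decomposing $B^*$ into matchings could require $\Theta(n)$ color classes, and the very factor of $\log(1/p)$ we are trying to save would reappear in the final bound.
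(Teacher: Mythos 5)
Your proposal is correct and follows essentially the same argument as the paper: bound $T_0$ by $t$ of a set of bad edges with good endpoints, use the degree cap from the good-vertex condition to decompose that set greedily into $O(np)$ matchings, and apply the events $E_1^c$, $E_2^c$ to each matching. The only cosmetic differences are that you use $B^*$ (a subset of the paper's $B'$), you sum the per-matching bound over color classes rather than average and take the maximum, and you get $14\ep n^3p^3$ in place of the paper's $15\ep n^3p^3$.
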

\begin{proof}
Let $B'$ be the set of bad edges with both endpoints in the set of good vertices. We shall now show that if $E_1$ and $E_2$ are both false, then $t(B') \le 15 \ep n^3 p^3$. This will complete the proof of the lemma, since $T_0 \le t(B')$. 

Recall that we call two elements of $B'$ `adjacent' if they share a common vertex in $G$. This defines an undirected graph structure on $B'$. Since the degree of an endpoint of any element of $B'$ (in the graph $G$) is less than $7np$, it is clear that the maximum vertex degree of the adjacency graph on $B'$ is less than $14np$. Thus, there is a coloring of  this graph with $\le 14np + 1\le 15 np$ colors such that no two adjacent elements of $B'$ receive the same color. (This is a standard argument in graph theory: arrange the elements of $B'$ in some arbitrary order; color the $i$th element with a color that was not given to any of its neighbors among the first $i-1$ elements. This produces a coloring such that no two adjacent elements receive the same color, and it is also clear that we need at most $14np + 1$ colors.)

Let us fix such a coloring. For each color $c$, let $F_c$ denote the subset of $B'$ that receives the color $c$. Note that each $F_c$ is a matching by construction.

Now take the color $c$ that maximizes $t(F_c)$. Then 
\[
t(F_c) \ge \frac{t(B')}{\text{number of colors}} \ge \frac{t(B')}{15 np}. 
\] 
By the falsity of $E_1$ we have $|F_c|\le Lnp$; and therefore, the falsity of $E_2$ shows that $t(F_c)\le \ep n^2 p^2$. Thus, $t(B') \le 15 \ep n^3 p^3$.  This completes the proof. 
\end{proof}

\section{Tail bound for $T_1$}
In this section we bound the probability that there are too many triangles with exactly one bad vertex and two good vertices. As usual, we will continue to adhere to the notation and terminology introduced in the preceding sections. Additionally, let us define the event 
\[
E_3 := \{\text{There are more than $Lnp$ bad vertices}\}.
\]
For each $u\in V$, let $d_u$ be the degree of $u$ in $G$. For each $A\subseteq V$ let
\[
d(A) := \sum_{u\in A} d_u.
\]
\begin{lmm}\label{bintail}
Let $X$ be a binomial random variable with mean $\lambda$. Then for any $t > 0$, 
\[
\pp(X \ge t) \le e^{-t\log (t/3\lambda)}.
\]
\end{lmm}
\begin{proof}
Since $X$ is a binomial random variable, it can be expressed as $\sum X_i$, where $X_i$ are i.i.d.\ Bernoulli random variables. Setting $X_i' = X_i$, and $X_{j(i)}= X_j$ if $j\ne i$ and $X_{i(i)} = 0$, we are in the setting of Theorem \ref{mainthm} with $a=1$ and $\lambda = \ee(X)$. This completes the proof.
\end{proof}
\begin{lmm}\label{degtail}
For any set of vertices $A$, and any $t\ge 0$, we have
\[
\pp(d(A)\ge t)\le \exp\Big(-\frac{t}{2}\log \frac{t}{6n|A|p}\Big). 
\]
\end{lmm}
\begin{proof}
Let $Y$ be the number of distinct edges in $G$ with at least one endpoint in $A$. A simple argument shows that $d(A) \le 2Y$. Note that $Y$ is a binomial random variable with mean $\le n|A|p$. Thus by Lemma \ref{bintail}, the proof is done.
\end{proof} 

\begin{lmm}\label{e3lmm}
There is an absolute constant $C$ such that if $p > C^{-1}n^{-1}\log n$, we have 
\[
\pp(E_3)\le e^{-Cn^2 p^2 \log(1/p)}.
\]
\end{lmm}
\begin{proof}
For any set of vertices $A$ with $|A|\le \lceil Lnp\rceil $, Lemma \ref{degtail} gives that
\begin{align*}
\pp(d(A) \ge 7L n^2 p^2) &\le \exp\Big(-\frac{7Ln^2p^2}{2} \log \frac{7Ln^2 p^2}{6\lceil Lnp \rceil np}\Big)\\
&\le e^{-CLn^2 p^2}.
\end{align*}
The number of choices of $A$ with $|A|\le \lceil Lnp\rceil $ is bounded by $e^{CLnp \log n}$. Thus, if we define 
\[
E_4 := \{\text{$\exists\; A\subseteq V$ with $|A|\le \lceil Lnp\rceil $ and $d(A) \ge 7Ln^2 p^2$}\},
\]
then there are absolute constants $C'$ and $C''$ such that 
\begin{align*}
\pp(E_4) &\le \exp(C'Lnp\log n - C''L n^2p^2). 
\end{align*}
So, if $p\ge C^{-1}n^{-1}\log n$ for some suitable constant $C$, we have
\[
\pp(E_4) \le e^{-Cn^2p^2 \log (1/p)}. 
\]
Next, note that if $E_3$ is true, then there is a set $A$ of vertices of size exactly $\lceil Lnp \rceil$, each of which has degree $\ge 7np$. Consequently $d(A) \ge 7np \lceil Lnp \rceil\ge 7Ln^2 p^2$. Thus, $E_3$ implies $E_4$. This completes the proof of the lemma.
\end{proof}
\begin{prop}\label{t1}
There is a constant $C(\ep)>0$ depending only on $\ep$ such that whenever $C(\ep)^{-1} n^{-1}\log n\le p\le C(\ep)$, we have
\[
\pp(T_1 \ge \ep n^3 p^3) \le e^{-C(\ep)n^2p^2 \log (1/p)}.
\]
\end{prop}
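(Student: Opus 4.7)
The strategy mirrors the proof of Proposition~\ref{tp} for $T'$, but a preliminary union bound over candidate bad-vertex sets is inserted to compensate for the non-local nature of vertex-goodness. By Lemma~\ref{e3lmm} it suffices to bound $\pp(T_1 \ge \ep n^3 p^3,\,E_3^c)$; on $E_3^c$ the bad-vertex set $S$ has $|S|\le Lnp$. For each $A \subseteq V$, define
\[
\hat T_A := \sum_{\substack{u \in A,\ v, w \in V\setminus\{u\} \\ v \ne w}} Y_{uvw}\,1\{d_v < 7np\}\,1\{d_w < 7np\}.
\]
Any triangle in $T_1$ has its unique bad vertex in $S$ and its other two vertices good, so $T_1 = \hat T_S$, and the union bound gives
\[
\pp(T_1 \ge \ep n^3 p^3,\,E_3^c) \le \sum_{|A|\le Lnp}\pp(\hat T_A \ge \ep n^3 p^3).
\]

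For fixed $A$, apply Theorem~\ref{mainthm} with $F=\{(u,\{v,w\}) : u\in A,\ v,w\ne u,\ v\ne w\}$, $X_{u,vw} = Y_{uvw}\,1\{d_v < 7np, d_w < 7np\}$, and $X'_{u,vw}=Y_{uvw}$. For $(x,\{y,z\}) \in F$ sharing at least two vertices with $\{u,v,w\}$, put $X_{(x,yz)(u,vw)}=0$; otherwise, copy $X_{(x,yz)}$ but, for each endpoint $r \in \{y, z\}$ coinciding with a vertex of $\{u, v, w\}$, replace $d_r$ by its value with the two edges of $\{uv, uw, vw\}$ incident to $r$ removed, and lower the corresponding threshold from $7np$ to $7np-2$. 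Conditions (a)--(c) are straightforward; (b) is direct because the modification removes all dependence on $I_{uv}, I_{uw}, I_{vw}$; and (d) holds with $a = O(np)$, because on $\{Y_{uvw} = 1\}$ the two removed indicators sum to exactly $2$, making the share-one-vertex differences vanish, while every share-two-or-three-vertex term in the deficit counts common neighbors of a pair involving $v$ or $w$, and is thus bounded by $7np$ by goodness of $v,w$.

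Plugging $\lambda \le Ln^3 p^4/2$, $a = O(np)$, and $t = \ep n^3 p^3$ into Theorem~\ref{mainthm} gives $\pp(\hat T_A \ge t) \le e^{-C(\ep)n^2p^2L}$, which dominates the $e^{CLnp\log n}$-sized union bound under the hypothesis $p \ge C(\ep)^{-1}n^{-1}\log n$. The principal technical obstacle is the $a = O(np)$ bound in condition~(d): the naive estimate would be of order $Lnp$, costing an extra factor of $L$ in the exponent and falling short of the target rate $n^2 p^2 L$, so the share-one-vertex cancellation on $\{Y_{uvw}=1\}$ is essential.
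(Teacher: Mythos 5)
Your proposal is correct and follows essentially the same route as the paper: fix a candidate bad-vertex set $A$ with $|A|\le Lnp$ (justified by Lemma~\ref{e3lmm}), apply Theorem~\ref{mainthm} with the natural comparison family and a localized modification of the degree condition that makes the share-one-vertex differences vanish on $\{Y_{uvw}=1\}$ (the paper phrases this as ``number of neighbors of $y$ in $V\setminus\{u,w\}$ is $<7np-2$,'' which is the same as your remove-two-edges-and-shift-threshold device), obtaining $a=O(np)$ and $\lambda = O(Ln^3p^4)$, then union bound over all such $A$. The only cosmetic departure is that you allow the two ``good'' vertices to lie in $A$ whereas the paper restricts them to $A^c$, which costs a bounded multiplicity in the deficit and does not affect the order of $a$; one small quibble is that the ``naive estimate of order $Lnp$'' you mention as the obstacle is not really what goes wrong without the share-one-vertex cancellation (without it the deficit could be as large as $(np)^2$ via a good shared vertex, or uncontrolled via a bad one), but the remedy you use is exactly the paper's.
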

\begin{proof}
Fix a set $A\subseteq V$ of size $\le Lnp$. Let $R_1(A)\subseteq V^{(3)}$ be the set of  unordered triplets with exactly $1$ vertex from $A$ and $2$ vertices from $A^c$. Let $T_1(A)$ be the number of triangles in $G$ with exactly $1$ vertex from $A$ and $2$ vertices from the set of good vertices in $A^c$. 

A typical element of $R_1(A)$ will be written in the form $uvw$, where $u\in A$ and $vw$ is an unordered pair of vertices from $A^c$. For each $uvw\in R_1(A)$, let $P_{uvw}$ be the indicator that $uvw$ is a triangle in $G$ and $v,w$ are good vertices. Then
\[
T_1(A) = \sum_{uvw\in R_1(A)} P_{uvw}.
\]
We shall first compute a tail bound for $T_1(A)$. We intend to apply Theorem~\ref{mainthm} to the collection $(P_{uvw})_{uvw\in R_1(A)}$ for this purpose. As usual, let $P'_{uvw} = Y_{uvw}$, where $Y_{uvw}$ is the indicator that $uvw$ is a triangle in $G$. This clearly verifies condition $(a)$ of Theorem \ref{mainthm}.

Fix $uvw\in R_1(A)$, with $u\in A$ and $v,w\in A^c$. Let $K\subseteq R_1(A)$ be the set of triplets that share no vertices with $uvw$ and let $K'\subseteq R_1(A)$ be the set of triplets that share exactly one vertex with $uvw$. 

Now take any $xyz\in K'$, with $x\in A$, $y,z\in A^c$. Then $xyz$ shares exactly one vertex with $uvw$.  Suppose the common vertex is in $A^c$. Then without loss of generality, $y=v$. Define $P_{xyz(uvw)}$ to be the indicator that $xyz$ is a triangle in $G$, that $z$ is a good vertex, and that the number of neighbors of $y$ in $V\backslash\{u,w\}$ is $< 7np-2$. 
If the common vertex is in $A$, that is $x=u$, let $P_{xyz(uvw)} = P_{xyz}$. 

If $xyz\in K$, let $P_{xyz(uvw)} = P_{xyz}$. If $xyz$ shares two or more vertices with $uvw$, let $P_{xyz(uvw)}= 0$. 

From the above construction, it is clear that the definitions of the random variables $\{P_{xyz(uvw)}\}_{xyz\in R_1(A)}$ do  not involve the edges $uv, vw, uw$, and therefore this collection is independent of $P'_{uvw}$. This verifies condition $(b)$ of Theorem \ref{mainthm}. 

Another easy verification shows that whenever $P_{xyz(uvw)} = 1$, we must have $P_{xyz}=1$. Thus $P_{xyz(uvw)} \le P_{xyz}$. This establishes condition $(c)$ of Theorem \ref{mainthm}.

Lastly, suppose that $P_{uvw} = 1$ in a particular realization of $G$. If $xyz\in K'$ and $P_{xyz} =1$ in that realization of $G$, then it is easy to see that we also have $P_{xyz(uvw)} = 1$. Moreover, since $P_{uvw} =1$, there can be at most $21np$ triangles sharing two or more vertices with $uvw$. Thus, if $P_{uvw} =1$, we have
\[
\sum_{xyz\in R_1(A)}P_{xyz} \le 21np + \sum_{xyz\in R_1(A)} P_{xyz(uvw)}. 
\]
Therefore, we have established that all the conditions of Theorem \ref{mainthm} hold, with $a=21np$ and $\lambda = \sum_{xyz\in R_1(A)}\ee(P'_{xyz}) \le n^2(Lnp) p^3$. This shows that for any $A\subseteq V$ with $|A|\le Lnp$, we have 
\[
\pp(T_1(A) \ge \ep n^3 p^3) \le e^{-C(\ep)Ln^2p^2}, 
\]
provided $p\le C'(\ep)$ for some small enough constant $C'(\ep)$. 

Now, if $T_1 \ge \ep n^3p^3$, then either $E_3$ happens, or there exists a set $A\subseteq V$ (of bad vertices) with $|A|\le Lnp$ and $T_1(A) \ge \ep n^3 p^3$. The number of choices of such $A$ is bounded by $e^{CLnp\log n}$. Thus, there are constants $C'$ and $C''(\ep)$ such that 
\begin{align*}
\pp(T_1 \ge \ep n^3 p^3) &\le \pp(E_3) + e^{C'Lnp\log n - C''(\ep)n^2 p^2 \log (1/p)}. 
\end{align*}
If $p> C(\ep)^{-1} n^{-1} \log n$ for some appropriately small constant $C(\ep)$, then we can combine the above bound with the bound on $\pp(E_3)$ from Lemma \ref{e3lmm} to complete the proof of the Proposition. 
\end{proof}

\section{Tail bound for $T_2$}
In this section we bound the probability that there are too many triangles with exactly two bad vertices and one good vertex. As usual, we will continue to adhere to the notation and terminology introduced in the preceding sections. We prove the following analog of Proposition \ref{t1}. The method of proof is similar.
\begin{prop}\label{t2}
There is a constant $C(\ep)>0$ depending only on $\ep$ such that whenever $C(\ep)^{-1} n^{-1}\log n\le p\le C(\ep)$, we have
\[
\pp(T_2 \ge \ep n^3 p^3) \le e^{-C(\ep)n^2p^2 \log (1/p)}.
\]
\end{prop}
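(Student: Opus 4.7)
The plan is to adapt the proof of Proposition \ref{t1} to the present situation. Fix a candidate bad-vertex set $A \subseteq V$ with $|A| \le Lnp$ and let $R_2(A) \subseteq V^{(3)}$ denote the set of triplets with two vertices in $A$ and one in $A^c$; write a typical element as $uvw$ with $u, v \in A$ and $w \in A^c$. The structural novelty compared to Proposition \ref{t1} is that two triplets in $R_2(A)$ may share the pair $\{u,v\}$ entirely within the bad-vertex set, so the number of triplets sharing such a pair cannot be bounded by a $7np$ estimate coming from a good-vertex degree (unlike in Proposition \ref{t1}, where every shared pair contained at least one good vertex). The natural bound on that count is instead $t_{uv}$, so I split $T_2 = T_2^g + T_2^b$ according to whether $uv$ is a good or a bad edge.

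For $T_2^g(A) := \sum_{uvw \in R_2(A)} P_{uvw}$, where $P_{uvw}$ is the indicator that $uvw$ is a triangle, $w$ is good, and $uv$ is a good edge, I apply Theorem \ref{mainthm} in direct analogy with Proposition \ref{t1}. Set $P'_{uvw} = Y_{uvw}$, so that $\lambda \le \binom{|A|}{2}(n - |A|) p^3 \le L^2 n^3 p^5 / 2$. Define $P_{xyz(uvw)}$ to equal $P_{xyz}$ when $xyz$ shares no vertex with $uvw$ or when the sole shared vertex lies in $A$; to equal zero when two or more vertices are shared; and, when the unique shared vertex is the $A^c$-vertex $w$, to be the indicator that $xyz$ is a triangle, the edge $xy$ is good, and that $w$ has fewer than $7np - 2$ neighbors in $V \setminus \{u, v\}$, so that this random variable does not depend on $I_{uw}, I_{vw}$. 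Conditions $(a)$, $(b)$, $(c)$ of Theorem \ref{mainthm} follow by the same verifications as in Proposition \ref{t1}. For $(d)$, when $P_{uvw} = 1$ the triplets sharing $\{u, v\}$ contribute at most $t_{uv} < \ep \ell np$ (forced by the good-edge condition built into $P_{uvw}$), while the triplets sharing $\{u, w\}$ or $\{v, w\}$ contribute at most $7np$ each because $w$ is good, yielding $a = \ep \ell np + 14 np \le 15 np$. Theorem \ref{mainthm} then gives $\pp(T_2^g(A) \ge \ep n^3 p^3 / 2) \le \exp(-C(\ep) n^2 p^2 L)$; a union bound over the $\exp(CLnp \log n)$ choices of $A$, combined with Lemma \ref{e3lmm} to handle $E_3$, disposes of $T_2^g$.

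For $T_2^b$, I group the triangles by the magnitude of $t_{uv}$ via a dyadic decomposition: for $k = 0, 1, \dots, K$ with $K = O(L)$, let $T_2^{b,k}(A)$ count the $T_2^b$-triangles whose $uv$-edge satisfies $t_{uv} \in [2^k \ep \ell np, 2^{k+1} \ep \ell np)$. For each $k$ I apply Theorem \ref{mainthm} with $P_{uvw}$ the indicator that $uvw$ is a triangle, $w$ is good, and $t_{uv}$ lies in the $k$-th window, and with the dominating variable $P'_{uvw} = Y_{uvw} \cdot 1_{\{t_{uv} \ge 2^k \ep \ell np\}}$; by Lemma \ref{bintail} applied conditionally on $Y_{uvw} = 1$, $\ee(P'_{uvw}) \lesssim p^3 \exp(-C \cdot 2^k \ep np)$, and so $\lambda^{(k)}$ is exponentially smaller than in the base case as $k$ grows. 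The auxiliary $P_{xyz(uvw)}$ are defined as before, but with the $t_{xy}$-window condition replaced by a decoupled version that excludes the contributions of $I_{ur}$ and $I_{vr}$ to $t_{xy}$, so that the independence required by condition $(b)$ is preserved. The shared-$\{u,v\}$ count in condition $(d)$ is now at most $2^{k+1} \ep \ell np$, giving $a^{(k)} = 2^{k+1} \ep \ell np + 14 np$. Substituting into Theorem \ref{mainthm} at threshold $\ep n^3 p^3 / (2K)$ produces $\exp(-C(\ep) n^2 p^2 L)$ per window: at small $k$ the exponent comes from the $\log(1/p) \asymp L$ factor in Theorem \ref{mainthm}, while at large $k$ it comes from the exponential decay of $\lambda^{(k)}$ in $2^k$. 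Summing over the $K$ windows, taking a union bound over $A$, and adding the $T_2^g$ contribution yields the proposition. The main obstacle is arranging the dyadic bookkeeping so that condition $(b)$ of Theorem \ref{mainthm} survives the introduction of the $t_{uv}$-window condition in $P'_{uvw}$; once the decoupled $t_{xy}$ is set up correctly, the rest of the argument is routine.
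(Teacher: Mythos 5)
Your proposal diverges significantly from the paper's argument and, as written, contains genuine gaps.

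The paper's proof of Proposition~\ref{t2} hinges on a small but decisive trick that you did not find: it \emph{does not} count triangles, but rather cherries. It defines $Q'_{uvw}$ as the indicator that $uw$ and $vw$ are edges — with \emph{no requirement that $uv$ be an edge} — and $Q_{uvw}$ as the same together with $w$ good, then bounds $T_2(A) \le S(A) = \sum Q_{uvw}$. Since $Q'_{uvw}$ depends only on the two edges $uw, vw$ incident to the good vertex $w$, a triplet $xyz$ with $\{x,y\}=\{u,v\}$ and $z\ne w$ poses no independence obstruction at all: $Q_{xyz}$ involves only edges at $z$, disjoint from $\{uw, vw\}$. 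The ``bad pair shared in $A$'' problem that drives your entire good/bad edge dichotomy and dyadic decomposition simply never arises; conditions $(b)$--$(d)$ are verified exactly as in Proposition~\ref{t1} with $a=14np$ and $\lambda \le n(Lnp)^2 p^2$.

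Your version, by contrast, keeps the requirement that $uv$ be an edge, which is what forces the split $T_2 = T_2^g + T_2^b$ and the dyadic machinery for $T_2^b$. Two concrete problems arise. First, in the $T_2^g$ piece your definition $P_{xyz(uvw)} = P_{xyz}$ when the sole shared vertex is the $A$-vertex $x=u$ violates condition $(b)$: for such $xyz$, the indicator that $xy=uy$ is a good edge involves $t_{uy} = \sum_r I_{ur}I_{yr}$, which contains the terms $I_{uv}I_{yv}$ and $I_{uw}I_{yw}$, so $P_{xyz(uvw)}$ is not independent of $P'_{uvw}=Y_{uvw}$. You would need a decoupled good-edge indicator here (analogous to your decoupled degree condition at $w$), and the claim that the verifications ``follow by the same verifications as in Proposition~\ref{t1}'' conceals exactly this gap. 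Second, in the $T_2^b$ piece the chosen dominating variable $P'_{uvw} = Y_{uvw}\cdot 1_{\{t_{uv}\ge 2^k \ep\ell np\}}$ depends on the entire collection $\{I_{ur}, I_{vr}\}_r$ of edges incident to $u$ or $v$, not merely on the three edges of the triplet. Condition $(b)$ then requires the auxiliary $P_{xyz(uvw)}$ to be independent of \emph{all} of those edges, and for any $xyz$ with $x=u$ the factor $Y_{xyz}=I_{uy}I_{uz}I_{yz}$ already depends on $I_{uy}, I_{uz}$, which lie in that forbidden set. Decoupling the $t_{xy}$-window condition, as you propose, does not cure this; the triangle indicator itself cannot be decoupled from edges at $u$. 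So the dyadic argument does not go through in the form you describe, and you would have to find a different dominating variable $P'_{uvw}$ — at which point you are, in effect, being pushed toward the paper's choice of a $Q'_{uvw}$ depending on only two edges.

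In short: the observation in your first paragraph (that triplets may share an $A$--$A$ pair whose co-degree is not controlled by good-vertex degrees) is exactly the right thing to notice, but the paper resolves it not by conditioning on $t_{uv}$ but by deleting the edge $uv$ from the definition altogether, which makes the whole machinery unnecessary and keeps $a = O(np)$ and $\lambda = O(L^2 n^3 p^4)$ in one clean application of Theorem~\ref{mainthm}.
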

\begin{proof}
Fix a set $A\subseteq V$ of size $\le Lnp$. Let $R_2(A)\subseteq V^{(3)}$ be the set of unordered triplets with exactly $2$ vertices from $A$ and $1$ vertex from $A^c$. Let $T_2(A)$ be the number of triangles in $G$ with exactly $2$ vertices from $A$ and $1$ vertex from the set of good vertices in $A^c$. 

A typical element of $R_2(A)$ will be written in the form $uvw$, where $uv$ is an unordered pair of vertices from $A$ and $w\in A^c$. For each $uvw\in R_2(A)$, let $Q'_{uvw}$ be the indicator that $uw$ and $vw$ are edges in $G$, and let $Q_{uvw}$ be the indicator that $uw$, $vw$ are edges in $G$ and $w$ is a  good vertex. Then 
\[
T_2(A) \le S(A) := \sum_{uvw\in R_2(A)}Q_{uvw}. 
\]
We intend to apply Theorem \ref{mainthm} to the collection $(Q_{uvw})_{uvw\in R_2(A)}$ to obtain a tail inequality for $S(A)$. Clearly $Q_{uvw} \le Q'_{uvw}$ and hence condition $(a)$ of Theorem \ref{mainthm} is satisfied. 

Now fix $uvw\in R_2(A)$, with $u,v\in A$ and $w\in A^c$. Let $K\subseteq R_2(A)$ be the set of triplets $xyz$ such that $z\ne w$. Let $K'$ be the set of triplets $xyz\in R_2(A)$ such that $z=w$ but $\{u,v\} \cap \{x,y\} =\emptyset$. Let $K''$ be the set of $xyz\in R_2(A)$ such that $z=w$ and $\{x,y\}\cap \{u,v\}\ne \emptyset$. Note that $R_2(A) = K \cup K' \cup K''$. 

Take any $xyz\in K'$. Define $Q_{xyz(uvw)}$ to be the indicator that $xz$ and $yz$ are edges in $G$, and that the number of neighbors of $z$ in $V\backslash\{u,v\}$ is $< 7np-2$. 

If $xyz\in K$, let $Q_{xyz(uvw)} = Q_{xyz}$. Lastly, if $xyz\in K''$, let $Q_{xyz(uvw)} = 0$. 

As in the proof of Proposition \ref{t1}, it is clear by construction that the definitions of the random variables $\{Q_{xyz(uvw)}\}_{xyz\in R_2(A)}$ do  not involve the edges $uw$ and $vw$, and therefore this collection is independent of $Q'_{uvw}$. This verifies condition $(b)$ of Theorem \ref{mainthm}. 

Again, if $Q_{xyz(uvw)} = 1$, we must have $Q_{xyz}=1$. Thus $Q_{xyz(uvw)} \le Q_{xyz}$, which establishes condition $(c)$ of Theorem \ref{mainthm}.

Finally, suppose that $Q_{uvw} = 1$ in a particular realization of $G$. If $xyz\in K'$ and $Q_{xyz} =1$ in that realization of $G$, then we also have $Q_{xyz(uvw)} = 1$. Moreover, since $Q_{uvw} =1$, we can easily conclude that the number of $xyz\in K''$ such that $Q_{xyz} = 1$ is bounded by $14 np$. Thus, if $Q_{uvw} =1$, we have
\[
\sum_{xyz\in R_2(A)}Q_{xyz} \le 14np + \sum_{xyz\in R_2(A)} Q_{xyz(uvw)}. 
\]
Therefore, we have shown that all conditions of Theorem \ref{mainthm} hold, with $a=14np$ and $\lambda = \sum_{xyz\in R_2(A)} \ee(Q'_{xyz}) \le n (Lnp)^2 p^2$. This shows that for any $A\subseteq V$ with $|A|\le Lnp$, we have 
\[
\pp(T_2(A) \ge \ep n^3 p^3) \le\pp(S(A) \ge \ep n^3 p^3) \le  e^{-C(\ep)Ln^2p^2}, 
\]
provided $p\le C'(\ep)$ for some small enough constant $C'(\ep)$.

We can now proceed exactly as in the last part of the proof of Proposition~\ref{t1} to complete the proof by invoking Lemma \ref{e3lmm}.
\end{proof}

\section{Tail bound for $T_3$}
In this section we obtain a tail bound for the number of triangles with all bad vertices. First, we need a simple lemma about the minimum number of edges in a graph with  given number triangles. The content of the lemma can be derived as a corollary of Theorem A of Alon \cite{alon81}, but the result is so simple that we give a direct proof. 
\begin{lmm}\label{anticauchy}
For any real symmetric matrix $(a_{ij})_{i,j=1}^n$, we have
\[
\sum_{i,j=1}^n a_{ij}^2 \ge \biggl|\sum_{i,j,k=1}^n a_{ij} a_{jk} a_{ki}\biggr|^{2/3}.
\]
Consequently, if an undirected graph has $r$ triangles, then it must have at least $\frac{1}{2}(6r)^{2/3}$ edges.
\end{lmm}
\begin{proof}
By two applications of the Cauchy-Schwarz inequality we get
\begin{align*}
\biggl|\sum_{i,j,k=1}^n a_{ij} a_{jk} a_{ki}\biggr| &\le \biggl(\sum_{i,j=1}^n a_{ij}^2\biggr)^{1/2} \biggl(\sum_{i,j=1}^n \biggl(\sum_{k=1}^n a_{ik} a_{jk}\biggr)^2\biggr)^{1/2}\\
&\le \biggl(\sum_{i,j=1}^n a_{ij}^2\biggr)^{1/2}\biggl(\sum_{i,j=1}^n\sum_{k,l=1}^n a_{ik}^2 a_{jl}^2\biggr)^{1/2} \\
&= \biggl(\sum_{i,j=1}^n a_{ij}^2\biggr)^{3/2}.
\end{align*}
This completes the proof of the first assertion of the theorem. For the graph theoretic conclusion, simply take $(a_{ij})$ to be adjacency matrix of the graph.
\end{proof}

\begin{prop}\label{t3}
There is a constant $C(\ep)>0$ depending only on $\ep$  such that whenever $C(\ep)^{-1} n^{-1}\log n\le p\le C(\ep)$, we have
\[
\pp(T_3 \ge \ep n^3 p^3) \le e^{-C(\ep)n^2p^2 \log (1/p)}.
\]
\end{prop}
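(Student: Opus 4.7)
The strategy is to forget the triangle structure and reduce to counting \emph{edges} among bad vertices. If $T_3 \ge \ep n^3 p^3$, then the induced subgraph $G[B]$ on the set $B$ of bad vertices contains at least $\ep n^3 p^3$ triangles, so Lemma \ref{anticauchy} forces $G[B]$ to have at least $\tfrac{1}{2}(6\ep)^{2/3} n^2 p^2 = C(\ep) n^2 p^2$ edges. The problem therefore reduces to ruling out the existence of a small vertex set that supports an unusually large number of edges.

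To control the size of $B$, I invoke Lemma \ref{e3lmm}: outside an event of probability at most $e^{-Cn^2 p^2 \log(1/p)}$, we have $|B| \le Lnp$. Conditional on this, it suffices to bound
\[
\pp\bigl(\exists\, A\subseteq V,\ |A|\le Lnp,\ e(G[A])\ge C(\ep)n^2 p^2\bigr),
\]
where $e(G[A])$ denotes the number of edges of $G$ inside $A$. The natural move is a union bound over all such $A$; the number of them is at most $e^{C Lnp \log n}$.

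For a fixed $A$, the count $e(G[A])$ is $\mathrm{Binomial}(\binom{|A|}{2},p)$ with mean $\lambda \le \tfrac12 L^2 n^2 p^3$. Applying Lemma \ref{bintail} with $t = C(\ep) n^2 p^2$ yields a bound of $\exp\bigl(-t \log(t/3\lambda)\bigr)$. The ratio $t/\lambda$ is of order $1/(L^2 p)$, and since $L^2$ is only polylogarithmic in $1/p$, one has $\log(t/3\lambda) = (1+o(1))L$ as $p \to 0$. Thus the single-set probability is at most $e^{-C(\ep) L n^2 p^2}$ for $p$ smaller than a suitable constant.

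Combining the union-bound factor $e^{CLnp \log n}$ with this single-set bound and with $\pp(E_3)\le e^{-Cn^2p^2 L}$, the controlling numerical inequality is $Lnp \log n \ll Ln^2 p^2$, i.e.\ $\log n \ll np$; this is exactly the content of the hypothesis $p \ge C(\ep)^{-1} n^{-1}\log n$ once the constant is chosen small enough. I do not anticipate a real obstacle: the conceptual work is done by Lemma \ref{anticauchy}, which trades the triangle-excess event for an edge-excess event on a small vertex set, and the latter is handled by a direct Chernoff-type bound. The only step requiring a moment of care is verifying that $\log(t/\lambda)$ really is of order $\log(1/p)$ and not swallowed by the logarithmic factor $L$ in the denominator, which is straightforward.
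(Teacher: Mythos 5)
Your proof is correct and follows essentially the same route as the paper: reduce triangle-excess to edge-excess via Lemma \ref{anticauchy}, control the bad-vertex count via Lemma \ref{e3lmm}, and union-bound over candidate sets $A$ of size $\le Lnp$, bounding the edge count in each fixed $A$ by a binomial tail estimate. The only cosmetic difference is that you invoke Lemma \ref{bintail} for the binomial tail, whereas the paper writes out the elementary bound $\binom{|A|(|A|-1)/2}{\lceil C(\ep)n^2p^2\rceil}p^{C(\ep)n^2p^2}$ directly; the two give the same exponent.
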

\begin{proof}
Take any set $A\subseteq V$ with $|A|\le Lnp$. Let $T_3(A)$ be the number of triangles with all vertices in $A$. If $T_3(A) \ge \ep n^3 p^3$, then by Lemma \ref{anticauchy}, the number of edges in $A$ must be $\ge C(\ep) n^2 p^2$. The probability of this event is bounded by
\begin{align}\label{t3ineq}
{|A|(|A|-1)/2 \choose \lceil C (\ep)n^2 p^2\rceil } p^{C(\ep) n^2 p^2 }. 
\end{align}
For any two integers $1\le b< a$, it is simple to see that 
\[
{a \choose b} \le \frac{a^b}{b!} \le \biggl(\frac{ae}{b}\biggr)^b = e^{b + b\log (a/b)}. 
\]
Applying this bound, we see that the quantity \eqref{t3ineq} is bounded by 
\[
\exp\Big(C'(\ep)n^2 p^2 + C'(\ep) n^2 p^2 \log \frac{(Lnp)^2}{\lceil C(\ep)n^2 p^2\rceil } - C''(\ep)n^2 p^2 \log (1/p)\Big),
\]
where $C(\ep)$, $C'(\ep)$ and $C''(\ep)$ are constants depending only on $\ep$. 
If $p$ is sufficiently small (depending on $\ep$), the above quantity  is less than $e^{-C(\ep) L n^2 p^2}$.
The proof is now completed by using Lemma \ref{e3lmm} as in the last part of the proof of Proposition~\ref{t1}. 
\end{proof}

\section{Proof of Theorem \ref{theorem}}
Combining Propositions \ref{tp}, \ref{t0}, \ref{t1}, \ref{t2}, \ref{t3}, and the inequality \eqref{mainineq}, it is easy to see that there is a constant $C(\ep)>0$ depending only on $\ep$, such that for  $C(\ep)^{-1}n^{-1}\log n\le p\le C(\ep)$, 
\[
\pp(T>\ee(T)+ 19\ep n^3 p^3) \le e^{-C(\ep) n^2 p^2 \log (1/p)}. 
\]
This completes the proof. 

\vskip.2in
\noindent{\bf Acknowledgements.}
The author is indebted to Amir Dembo for bringing the problem to his attention and to the referees for very careful reports. The author thanks Partha Dey and Will Perkins for checking the proof, and Nick Crawford for useful discussions in the early stages of the project.

\end{document}